\newtheorem{theorem}{Theorem}
\definecolor{darkgreen}{rgb}{0,0.5,0}
\definecolor{darkblue}{rgb}{0,0,0.8}
\definecolor{darkred}{rgb}{0.8,0,0}
\DeclareSymbolFont{cyrletters}{OT2}{wncyr}{m}{n}
\DeclareMathSymbol{\Sha}{\mathalpha}{cyrletters}{"58}
\renewcommand{\epsilon}{\varepsilon}
\renewcommand{\phi}{\varphi}
\renewcommand{\theta}{\vartheta}
\newcommand{\Q}{{\mathbb{Q}}}
\newcommand{\Qbar}{{\overline{\Q}}}
\newcommand{\Z}{{\mathbb{Z}}}
\newcommand{\F}{{\mathbb{F}}}
\newcommand{\R}{{\mathbb{R}}}
\newcommand{\pfr}{{\mathfrak{p}}}
\newcommand{\End}{\operatorname{End}}
\newcommand{\Aut}{\operatorname{Aut}}
\newcommand{\Jac}{\operatorname{Jac}}
\newcommand{\GL}{\operatorname{GL}}
\newcommand{\SL}{\operatorname{SL}}
\newcommand{\Gal}{\operatorname{Gal}}
\newcommand{\GalQ}{{\Gal(\Qbar|\Q)}}
\renewcommand{\H}{{\operatorname{H}}}
\newcommand{\Ocal}{{\mathcal{O}}}
\newcommand{\Sel}{\operatorname{Sel}}
\newcommand{\disc}{\operatorname{disc}}
\newcommand{\rk}{\operatorname{rk}}
\newcommand{\lcm}{\operatorname{lcm}}
\newcommand{\an}{{\mathrm{an}}}
\newcommand{\nr}{{\mathrm{nr}}}
\newcommand{\tors}{{\mathrm{tors}}}
\newcommand{\Reg}{\operatorname{Reg}}
\newcommand{\ord}{\operatorname{ord}}
\newcommand{\isom}{\cong}
\begin{document}

\title[Exact verification of strong BSD for some modular abelian surfaces]{Exact verification of the strong BSD conjecture for some absolutely simple abelian surfaces}

\author[T. Keller]{Timo Keller}\email{Timo.Keller@uni-bayreuth.de} 
\author[M. Stoll]{Michael Stoll}\email{Michael.Stoll@uni-bayreuth.de} 
\address{Lehrstuhl Mathematik~II (Computeralgebra), Universität Bayreuth, Universitätsstraße 30, 95440 Bayreuth, Germany}

\thanks{This work was supported by the Deutsche Forschungsgemeinschaft (DFG, German Research Foundation) -- Projektnummer STO 299/18-1, AOBJ: 667349.}

\date{June 30, 2021}

\subjclass[2020]{11G40 (11-04, 11G10, 11G30, 14G35)}
	
	\begin{abstract}
		Let $X$ be one of the $28$ Atkin-Lehner quotients of a curve~$X_0(N)$
		such that $X$ has genus~$2$ and its Jacobian variety~$J$ is
		absolutely simple. We show that the Shafarevich-Tate group $\Sha(J/\Q)$
		is trivial. This verifies the strong BSD~conjecture for~$J$.
	\end{abstract}

	\maketitle

	\section{Introduction}
	
	Let $A$ be an abelian variety over~$\Q$ and assume that its $L$-series
	$L(A,s)$ admits an analytic continuation to the whole complex plane.
	The \emph{weak BSD conjecture} (or BSD rank conjecture) predicts that
	the Mordell-Weil rank $r = \rk A(\Q)$ of~$A$ equals the analytic rank
	$r_{\an} = \ord_{s=1} L(A, s)$. The \emph{strong BSD conjecture}
	asserts that the Shafarevich-Tate group $\Sha(A/\Q)$ is finite and that
	its order equals the ``analytic order of Sha'',
	\begin{equation} \label{Sha_an}
		\#\Sha(A/\Q)_{\an} := \frac{\#A(\Q)_{\tors} \cdot \#A^\vee(\Q)_{\tors}}{\prod_v c_v}
		\cdot \frac{L^*(A, 1)}{\Omega_A \Reg_{A/\Q}} \,.
	\end{equation}
	Here $A^\vee$ is the dual abelian variety, $A(\Q)_{\tors}$ denotes the
	torsion subgroup of~$A(\Q)$, the product $\prod_v c_v$ runs over all finite
	places of~$\Q$ and $c_v$ is the Tamagawa number of~$A$ at~$v$, $L^*(A, 1)$
	is the leading coefficient of the Taylor expansion of~$L(A, s)$ at $s =
	1$,
	and $\Omega_A$ and $\Reg_{A/\Q}$ denote the volume of~$A(\R)$ and the
	regulator of~$A(\Q)$, respectively.
	
	If $A$ is \emph{modular} in the sense that $A$ is an isogeny factor
	of the Jacobian~$J_0(N)$ of the modular curve~$X_0(N)$ for some~$N$,
	then the analytic continuation of~$L(A,s)$ is known. If $A$ is in addition
	absolutely simple, then $A$ is associated (up to isogeny) to a Galois orbit
	of size~$\dim(A)$ of newforms of weight~$2$ and level~$N$, such that
	$L(A, s)$ is the product of $L(f, s)$ with $f$ running through these newforms.
	Such an abelian variety has \emph{real multiplication}: its endomorphism
	ring (over~$\Q$ and over~$\Qbar$) is an order in a totally real
	number field of degree~$\dim(A)$. If, furthermore,
	$\ord_{s=1} L(f,s) \in \{0,1\}$ for one (equivalently, all) such $f$,
	then the weak BSD conjecture holds for~$A$; see~\cite{KolyvaginLogachev}.
	
	All elliptic curves over~$\Q$ arise as one-dimensional modular abelian
	varieties~\cite{Wiles1995,TaylorWiles1995,BCDT2001} such that $N$ is the conductor
	of~$A$. For all elliptic curves of (analytic) rank $\le 1$ and $N < 5000$,
	the strong BSD conjecture has been verified~\cite{GJPST,MillerStoll2013,CreutzMiller2012}.
	
	In this note, we consider certain absolutely simple abelian \emph{surfaces}
	and show that strong BSD holds for them. One class of such surfaces arises
	as the Jacobians of quotients~$X$ of~$X_0(N)$ by a group of Atkin-Lehner
	operators. Hasegawa~\cite{Hasegawa1995} has determined the complete
	list of such~$X$ of genus~$2$; $28$ of them have absolutely simple
	Jacobian~$J$. For most of these Jacobians (and those of further curves taken
	from~\cite{Wang1995}), it has been numerically verified
	in~\cite{FLSSSW,vanBommel2019} that $\#\Sha(J/\Q)_{\an}$ is very close to
	an integer,
	which equals $\#\Sha(J/\Q)[2]$ ($= 1$ in the cases
	considered here). We complete the verification of strong BSD for
	these Jacobians by showing that $\#\Sha(J/\Q)_{\an}$ is indeed an integer
	and $\Sha(J/\Q)$ is trivial.
	
	\section{Methods and algorithms} \label{S:algo}
	
	In the following, we denote the abelian surface under consideration by~$A$;
	it is an absolutely simple isogeny quotient of~$J_0(N)$, defined over~$\Q$.
	We frequently use the fact that $A$ can be obtained as the Jacobian variety
	of a curve~$X$ of genus~$2$. The algorithms described below have been
	implemented in Magma~\cite{Magma}.
	
	Recall that a \emph{Heegner discriminant} for~$A$ is a fundamental discriminant
	$D < 0$ such that for $K = \Q(\sqrt{D})$, the analytic rank of~$A/K$ equals
	$\dim A = 2$ and all prime divisors of~$N$ split in~$K$. Heegner discriminants exist
	by~\cite{BFH1989,Waldspurger1985}. Since Magma can determine whether
	$\ord_{s=1} L(f, s)$ is $0$, $1$, or larger (for a newform~$f$ as considered
	here), we can easily find one or several Heegner discriminants for~$A$.
	
	Associated to each Heegner discriminant~$D$ is a \emph{Heegner point}
	$y_D \in A(K)$, unique up to sign and adding a torsion point.
	In particular, the \emph{Heegner index} $I_D = (A(K) : \End(A) \cdot y_D)$
	is well-defined.
	
	Recall that $\Ocal = \End(A) = \End_{\Q}(A)$ is an order in a real quadratic field.
	In all cases considered here, $\Ocal$ is a maximal order and a principal ideal domain.
	For each prime ideal~$\pfr$ of~$\Ocal$, we have the residual Galois
	representation $\rho_\pfr \colon \GalQ \to \Aut(A[\pfr]) \simeq \GL_2(\F_\pfr)$,
	where $\F_\pfr = \Ocal/\pfr$ denotes the residue class field.
	
	We can use Magma's functionality for $2$-descent on hyperelliptic
	Jacobians based on~\cite{Stoll2001} to determine $\Sha(A/\Q)[2]$.
	In all cases considered here,
	this group is trivial, which implies that $\Sha(A/\Q)[2^\infty] = 0$.
	(In fact, this had already been done in~\cite{FLSSSW} for most of the curves.)
	It is therefore sufficient to consider the $p$-primary parts of~$\Sha(A/\Q)$
	for odd~$p$.
	
	\begin{theorem}\label{th:main}
		Let $A$ be an abelian variety of $\GL_2$-type over $\Q$. Assume that $\ord_{s=1} L(f,s) \in \{0,1\}$ for one (equivalently, all) newform associated to $A$.
		\begin{enumerate}
			\item \label{item:CCSS} If the level $N$ of $A$ is square-free, $\ord_p(\#\Sha(A/\Q)_\an) = \ord_p(\#\Sha(A/\Q))$ for all rational primes $p \neq 2$ such that $\rho_\pfr$ is irreducible for all $\pfr \mid p$.
			
			\item \label{item:KL} If there exists a polarization $\lambda: A \to A^\vee$, $\Sha(A/\Q)[\pfr] = 0$ for all prime ideals $\pfr \mid p \neq 2$ such that $\rho_\pfr$ is irreducible and $p$ does not divide $\deg\lambda$, and, for some Heegner field $K$ with Heegner discriminant $D$, $I_D$ and the order of the groups $\H^1(K_v^\nr|K_v,A)$ with $v$ running through the places of $K$.
		\end{enumerate}
	\end{theorem}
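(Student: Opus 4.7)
The plan is to reduce each part of the theorem to an established framework in the literature on the $p$-part of BSD for modular abelian varieties, and to verify that the listed hypotheses match exactly what those results require.

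For part~\eqref{item:CCSS}, I would split into the rank~$0$ and rank~$1$ cases. In the rank~$0$ case, the plan is to combine Kato's divisibility in the cyclotomic Iwasawa main conjecture for modular forms with the reverse divisibility of Skinner--Urban, and then to deduce the $p$-part of the BSD formula at $s=1$ via a standard control theorem argument. The square-free level ensures that all bad reduction is multiplicative, which matches the ``good or multiplicative reduction at $p$''-style hypotheses of those theorems and keeps the Tamagawa factors tractable; residual irreducibility of $\rho_\pfr$ supplies the usual ``big image'' condition required for both divisibilities. The rank~$1$ case is then handled by the theorem of Jetchev--Skinner--Wan, which folds Heegner points, Gross--Zagier, and Kolyvagin into the same Iwasawa-theoretic machinery. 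A subtlety is that these results are formulated for individual newforms, so one has to apply them at each $\pfr \mid p$ and recombine by Galois-equivariance to obtain the statement for an abelian variety of $\GL_2$-type; this step is routine but must be made explicit.

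For part~\eqref{item:KL}, the plan is to apply Kolyvagin's Euler system of Heegner points in the form developed for $\GL_2$-type abelian varieties by Nekov\'a\v r and Howard. From the Heegner point $y_D \in A(K)$---of infinite order by Gross--Zagier, since $A/K$ has analytic rank~$2$---one constructs Kolyvagin classes $\kappa_n \in \H^1(K, A[\pfr])$ for square-free products~$n$ of Kolyvagin primes. The hypothesis $p \nmid \deg\lambda$ turns~$\lambda$ into a Galois-equivariant isomorphism $A[\pfr] \isoto A^\vee[\pfr]$, so that local Tate duality gives a self-pairing on the $\pfr$-Selmer group, while residual irreducibility of $\rho_\pfr$ supplies the big-image input needed for the Chebotarev selection of Kolyvagin primes. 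Kolyvagin's descent then bounds $\Sel_\pfr(A/K)$ by $\ord_\pfr(I_D)$ together with local terms measuring the gap between the Bloch--Kato and unramified local conditions---precisely the $\pfr$-parts of the groups $\H^1(K_v^\nr|K_v, A)$ appearing in the statement, which at bad places of $A$ are controlled by the Tamagawa numbers. Whenever $\pfr$ divides none of these quantities, the bound forces $\Sha(A/K)[\pfr] = 0$, and decomposing under complex conjugation on $K$ transfers the vanishing to $\Sha(A/\Q)[\pfr]$. The hard part is the local bookkeeping: one must verify that the listed inputs really account for every factor appearing in the Kolyvagin bound, and that $p \nmid \deg\lambda$ together with residual irreducibility suffices to preclude any further obstruction coming from the polarization or from the $\Ocal$-module structure on $A(K)$.
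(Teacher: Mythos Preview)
Your proposal is correct and aligns with the paper's approach, which consists entirely of two citations: part~\eqref{item:CCSS} is attributed to Theorems~C and~D of a single reference (a packaged $p$-part-of-BSD result for $\GL_2$-type abelian varieties of square-free level), and part~\eqref{item:KL} is stated to be an explicit version of Kolyvagin--Logach\"ev. What you have written is essentially an unpacking of the contents of those references---Kato plus Skinner--Urban and the rank-one analogue for~\eqref{item:CCSS}, and the Kolyvagin Euler-system descent (in the Nekov\'a\v{r}/Howard formulation for real multiplication) for~\eqref{item:KL}---so the underlying mathematics is the same. The only difference is granularity: the paper treats the theorem as a black-box import from the literature, whereas you sketch the internal architecture of those black boxes; your version is more informative but also carries more burden of verification (e.g.\ that the rank-one Iwasawa input really is available at the required generality for newforms with nontrivial coefficient field, and that the recombination over $\pfr \mid p$ goes through cleanly), which the paper sidesteps by citing a source where this has already been done.
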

	\begin{proof}
		(\ref{item:CCSS}) is~\cite[Theorems~C and~D]{CCSS}. (\ref{item:KL}) is an explicit version of~\cite{KolyvaginLogachev}.
	\end{proof}
	
	We have implemented the following algorithms.
	\begin{enumerate}
		\item \emph{Image of the residual Galois representations.}
		Extending the algorithm described in~\cite{Dieulefait2002}, which determines
		a finite small superset of the primes $p$ with $\rho_p$ reducible in the
		case that $\End_\Qbar(A) = \Z$, we obtain a finite small superset of the
		prime ideals~$\pfr$ of~$\Ocal$ such that $\rho_\pfr$ is reducible. Building
		upon this and~\cite{Cojocaru2005}, we can also check whether $\rho_\pfr$
		has maximal possible image $\GL_2(\F_\pfr)^{\det \in \F_p^\times}$.
		
		The irreducibility of~$\rho_\pfr$ for all $\pfr \mid p$ is the crucial
		hypothesis in~\cite[Theorems~C and~D]{CCSS}, and
		in~\cite{KolyvaginLogachev}.
		
		\item \emph{Computation of the Heegner index.} 
		We can compute the height of a Heegner point using the main theorem
		of~\cite{GrossZagier1986}. By enumerating all points of that approximate height using~\cite{MuellerStoll2016b}, we can identify the Heegner point $y_D \in A(K)$
		as a $\Q$-point on $A$, or on the quadratic twist $A^K$, depending on the
		analytic rank of~$A/\Q$. An alternative implementation uses the $j$-invariant
		morphism $X_0(N) \to X_0(1)$ and takes the preimages of the $j$-invariants
		belonging to elliptic curves with CM by the order of discriminant~$D$.
		A variant of this is based on approximating $q$-expansions of cusp forms
		analytically and finding the Heegner point as an algebraic approximation.
		
		\item \label{End}
		\emph{Determination of the (geometric) endomorphism ring of~$A/\Q$ and its action
			on the Mordell-Weil group $A(\Q)$.} Given the Heegner point~$y_D$, this
		can be used to compute the Heegner index~$I_D$.
		
		We can also compute the \emph{kernel of a given endomorphism} as an abstract
		$\GalQ$-module together with explicit generators in~$A(\Qbar)$.
		We apply this to find the characters corresponding to the constituents
		of~$\rho_\pfr$ when the representation is reducible.
		
		\item \emph{Analytic order of $\Sha$.}
		If the $L$-rank $\ord\nolimits_{s=1}L(f,s)$ of $A/\Q$ is zero, then we can compute
		$\#\Sha(A/\Q)_\an$ exactly as a rational number using modular symbols via
		Magma's \verb|LRatio| function, which gives $L(A,1)/\Omega_A^{-1}
		\in \Q_{>0}$,
		together with~\eqref{Sha_an}, since $\#A(\Q)_\tors = \#A^\vee(\Q)_\tors$
		and the Tamagawa numbers~$c_v$ are known.
		
		When the $L$-rank is~$1$, we can compute the analytic order of~$\Sha$ from $\#\Sha(A/K)_\an = \#\Sha(A/\Q)_\an \cdot \#\Sha(A^K/\Q)_\an \cdot 2^\text{(bounded exponent)}$ and the
		formula
		\begin{align*}
			\#\Sha(A/K)_\an 
			&=	\frac{\#A(K)_\tors \#A^\vee(K)_\tors}{c_\pi^2 u_K^4 \prod_p c_p(A/\Q)^2}
			\cdot \frac{\|\omega_f\|^2 \|\omega_{f^\sigma}\|^2}{\Omega_{A/K}}
			\cdot \frac{\hat{h}(y_{D,f}) \hat{h}(y_{D,f^\sigma})\disc\Ocal}{\Reg_{A/K}}
		\end{align*}
		deduced from~\cite{GrossZagier1986}; here, the last two factors are integral.
		In the computation of $\#\Sha(A^K/\Q)_\an$, we use van Bommel's code to compute the
		Tamagawa numbers of $A/\Q$ and $A^K/\Q$ and the real period of $A^K/\Q$. In the cases where
		his code did not succeed, we used another Heegner discriminant.
		
		\item \label{Sel}
		\emph{Isogeny descent.}
		In the cases when $\pfr$ is odd and $\rho_\pfr$ is reducible, we determined
		characters $\chi_1$ and~$\chi_2$ such that
		\[ \rho_\pfr \isom \begin{pmatrix} \chi_1 & * \\ 0 & \chi_2 \end{pmatrix} ; \]
		see~\eqref{End} above. We then compute upper bounds for the $\F_p$-dimensions
		of the two Selmer groups associated to the corresponding two isogenies of
		degree~$p$ whose composition is multiplication by a generator~$\pi$ of~$\pfr$ on~$A$;
		see~\cite{SchaeferStoll2004}.
		From this, we deduce an upper bound for the dimension of the $\pi$-Selmer
		group of~$A$, which, in the cases considered here, is always $\le
		1$.
		Using the known finiteness of~$\Sha(A/\Q)$, which implies that $\Sha(A/\Q)[\pfr]$
		has even dimension, this shows that $\Sha(A/\Q)[\pfr] = 0$.
		
		\item \label{p-adic}
		\emph{Computation of the $p$-adic $L$-function.}
		We can also compute the $p$-adic $L$-functions of newforms of weight $2$,
		trivial character and arbitrary coefficient ring for $p^2 \nmid N$.
		Computing $\ord_\pfr\mathcal{L}_\pfr^*(f,0)$ and using the known
		results~\cite{SkinnerUrban2014,Skinner2016} about the $\GL_2$~Iwasawa Main Conjecture~(IMC) with the hypotheses that
		$\rho_\pfr$ is irreducible and there is a $q \| N$ with $\rho_\pfr$ ramified
		at $q \neq p$ gives us information about the $\pfr^\infty$-Selmer group.
	\end{enumerate}
	
	\section{Results}
	
	\begin{figure}[tbp]
		\begin{tabular}{lllllrrr}
			\toprule
			$X$ & $r$ & $\Ocal$ & $\#\Sha_\an$ & $\rho_\pfr$ red. & $c$ & $(D,I_D)$ &  $\#\Sha$ \\
			\midrule
			$X_0(23)$        & $0$ & $\sqrt{5}$  & $1$   & $11_1$           & $11$ & $(-7, 11)$  & $11^{0}$ \\
			$X_0(29)$        & $0$ & $\sqrt{2}$  & $1$   & $7_1$             & $7$ & $(-7,  7)$  & $7^{0}$  \\
			$X_0(31)$        & $0$ & $\sqrt{5}$  & $1$   & $\sqrt{5}$        & $5$ & $(-11, 5)$  & $5^0$  \\
			$X_0(35)/w_7$    & $0$ & $\sqrt{17}$ & $1$   & $2_1$             & $1$ & $(-19, 1)$  & $1$    \\
			$X_0(39)/w_{13}$ & $0$ & $\sqrt{2}$  & $1$   & $\sqrt{2}$, $7_1$ & $7$ & $(-23, 7)$  & $7^{0}$  \\
			$X_0(67)^+$      & $2$ & $\sqrt{5}$  & $1$   &                   & $1$ & $(-7,  1)$  & $1$    \\
			$X_0(73)^+$      & $2$ & $\sqrt{5}$  & $1$   &                   & $1$ & $(-19, 1)$  & $1$    \\
			$X_0(85)^*$      & $2$ & $\sqrt{2}$  & $1$   & $\sqrt{2}$        & $1$ & $(-19, 1)$  & $1$    \\
			$X_0(87)/w_{29}$ & $0$ & $\sqrt{5}$  & $1$   & $\sqrt{5}$        & $5$ & $(-23, 5)$  & $5^0$  \\
			$X_0(93)^*$      & $2$ & $\sqrt{5}$  & $1$   &                   & $1$ & $(-11, 1)$  & $1$    \\
			$X_0(103)^+$     & $2$ & $\sqrt{5}$  & $1$   &                   & $1$ & $(-11, 1)$  & $1$    \\
			$X_0(107)^+$     & $2$ & $\sqrt{5}$  & $1$   &                   & $1$ & $(-7,  1)$  & $1$    \\
			$X_0(115)^*$     & $2$ & $\sqrt{5}$  & $1$   &                   & $1$ & $(-11, 1)$  & $1$    \\
			$X_0(125)^+$     & $2$ & $\sqrt{5}$  & $1$   & $\sqrt{5}$        & $1$ & $(-11, 1)$  & $5^0$  \\
			$X_0(133)^*$     & $2$ & $\sqrt{5}$  & $1$   &                   & $1$ & $(-31, 1)$  & $1$    \\
			$X_0(147)^*$     & $2$ & $\sqrt{2}$  & $1$   & $\sqrt{2}$, $7_1$ & $1$ & $(-47, 1)$  & $7^0$  \\
			$X_0(161)^*$     & $2$ & $\sqrt{5}$  & $1$   &                   & $1$ & $(-19, 1)$  & $1$    \\
			$X_0(165)^*$     & $2$ & $\sqrt{2}$  & $1$   & $\sqrt{2}$        & $1$ & $(-131,1)$  & $1$    \\
			$X_0(167)^+$     & $2$ & $\sqrt{5}$  & $1$   &                   & $1$ & $(-15, 1)$  & $1$    \\
			$X_0(177)^*$     & $2$ & $\sqrt{5}$  & $1$   &                   & $1$ & $(-11, 1)$  & $1$    \\
			$X_0(191)^+$     & $2$ & $\sqrt{5}$  & $1$   &                   & $1$ & $(-7,  1)$  & $1$    \\
			$X_0(205)^*$     & $2$ & $\sqrt{5}$  & $1$   &                   & $1$ & $(-31, 1)$  & $1$    \\
			$X_0(209)^*$     & $2$ & $\sqrt{2}$  & $1$   &                   & $1$ & $(-51, 1)$  & $1$    \\
			$X_0(213)^*$     & $2$ & $\sqrt{5}$  & $1$   &                   & $1$ & $(-11, 1)$  & $1$    \\
			$X_0(221)^*$     & $2$ & $\sqrt{5}$  & $1$   &                   & $1$ & $(-35, 1)$  & $1$    \\
			$X_0(287)^*$     & $2$ & $\sqrt{5}$  & $1$   &                   & $1$ & $(-31, 1)$  & $1$    \\
			$X_0(299)^*$     & $2$ & $\sqrt{5}$  & $1$   &                   & $1$ & $(-43, 1)$  & $1$    \\
			$X_0(357)^*$     & $2$ & $\sqrt{2}$  & $1$   &                   & $1$ & $(-47, 1)$  & $1$    \\
			\bottomrule
		\end{tabular}
		\caption{BSD data for the absolutely simple modular Jacobians of Atkin-Lehner quotients of $X_0(N)$.}
		\label{fig:BSDdata}
	\end{figure}
	
	\begin{figure}[tbp]
		\begin{tabular}{lrrrr}
			\toprule
			$X$ & $D_K$ & $\#\Sha(A^K/\Q)_\an$ & $\#\Sha(A/K)_\an$ & $\#\Sha(A/\Q)_\an$\\
			\midrule
			$X_0(67)^+$		 & $ -7$   & $4$  & $1$ & $1$ \\
			$X_0(73)^+$      & $-19$   & $4$  & $1$ & $1$ \\
			$X_0(85)^*$      & $-19$   & $4$  & $1$ & $1$ \\
			$X_0(93)^*$      & $-11$   & $1$  & $1$ & $1$ \\
			$X_0(103)^+$     & $-11$   & $4$  & $1$ & $1$ \\
			$X_0(107)^+$     & $ -7$   & $4$  & $1$ & $1$ \\
			$X_0(115)^*$     & $-11$   & $1$  & $1$ & $1$ \\
			$X_0(125)^+$     & $-11$   & $4$  & $1$ & $1$ \\
			$X_0(133)^*$     & $-31$   & $4$  & $1$ & $1$ \\
			$X_0(147)^*$     & $-47$   & $4$  & $1$ & $1$ \\
			$X_0(161)^*$     & $-19$   & $1$  & $1$ & $1$ \\
			$X_0(165)^*$     &$-131$   & $16$ & $4$ & $1$ \\
			$X_0(167)^+$     & $-15$   & $4$  & $1$ & $1$ \\
			$X_0(177)^*$     & $-11$   & $4$  & $1$ & $1$ \\
			$X_0(191)^+$     & $ -7$   & $4$  & $1$ & $1$ \\
			$X_0(205)^*$     & $-31$   & $4$  & $1$ & $1$ \\
			$X_0(209)^*$     & $-79^*$ & $2$  & $1$ & $1$ \\
			$X_0(213)^*$     & $-11$   & $4$  & $1$ & $1$ \\
			$X_0(221)^*$     & $-35$   & $4$  & $1$ & $1$ \\
			$X_0(287)^*$     & $-21$   & $4$  & $1$ & $1$ \\
			$X_0(299)^*$     & $-43$   & $4$  & $1$ & $1$ \\
			$X_0(357)^*$     & $-47$   & $2$  & $1$ & $1$ \\
			\bottomrule
		\end{tabular}
		\caption{Analytic order of $\Sha$ for the curves of $L$-rank $1$. (A $^*$ means that we used a different Heegner discriminant than in Figure~\ref{fig:BSDdata} in the case where van Bommel's \texttt{TamagawaNumber} did not succeed.)}
		\label{fig:Sha_an}
	\end{figure}
	
	Our results are summarized in Figure~\ref{fig:BSDdata}. The first column
	gives the genus~$2$ curve~$X$ as a quotient of~$X_0(N)$ by a subgroup of the Atkin-Lehner
	involutions. We denote the Atkin-Lehner involution associated to
	a divisor~$d$ of~$N$ such that $d$ and~$N/d$ are coprime by~$w_d$.
	We write $X_0(N)^+$ for $X_0(N)/w_N$ and $X_0(N)^*$ for the
	quotient of~$X_0(N)$ by the full group of Atkin-Lehner operators.
	We are considering the Jacobian $A$ of $X$.
	
	The second column gives the algebraic rank of $A/\Q$, which is equal to its analytic rank
	by the combination of the main results of~\cite{GrossZagier1986} and~\cite{KolyvaginLogachev}.
	
	The third column specifies $\Ocal$ as the maximal order in the number field obtained
	by adjoining the given square root to~$\Q$.
	
	The fourth column gives the analytic order of the Shafarevich-Tate group of $A$,
	defined as in the introduction. For the surfaces of $L$-rank~$1$, the intermediate
	results of our computation are contained in Figure~\ref{fig:Sha_an}.
	
	The fifth column specifies the prime ideals~$\pfr$ of~$\Ocal$ such that $\rho_\pfr$
	is reducible. The notation $p_1$ means that $p$ is split in~$\Ocal$ and $\rho_\pfr$
	is reducible for exactly one $\pfr \mid p$. If $p$ is ramified in~$\Ocal$, we write
	$\sqrt{p}$ for the unique prime ideal~$\pfr \mid p$.
	
	The sixth column gives the odd part of~$\lcm_p c_p(A/\Q)$, which can be obtained from
	the LMFDB~\cite{LMFDB}.
	
	The seventh column gives a Heegner discriminant~$D$ for~$A$ together with 
	the odd part
	of the Heegner index $I_D$.
	Our computation confirms that the Tamagawa product divides the Heegner index.
	
	The last column contains the order of the Shafarevich-Tate group of $A/\Q$.
	An entry~$1$ means that it follows immediately from the previous columns,
	the computation of $\Sel_2(A/\Q)$ and Theorem~\ref{th:main}
	that all $p$-primary components of $\Sha(A/\Q)$ vanish.
	
	Otherwise, the order
	of~$\Sha(A/\Q)$ is given as a product of powers of the odd primes~$p$ such that
	some $\rho_\pfr$ with $\pfr \mid p$ is reducible or $p$ divides $c \cdot I_D$.
	(In each of these cases, there is exactly one such~$p$.)
	We have to justify that the exponents are all zero.
	In the first three rows we use~\cite{Mazur1978} to show that for the reducible
	odd~$\pfr$ on has $\Sha(J_0(p)/\Q)[\pfr] = 0$; this is a consequence of 
	these prime
	ideals being Eisenstein primes.
	
	In the remaining cases, we used the approach described in item~\eqref{Sel}
	in Section~\ref{S:algo}.
	For the rows with $A = \Jac(X_0(39)/w_{13})$ and $A = \Jac(X_0(87)/w_{29})$,
	one has non-split short exact sequences of Galois modules
	\begin{align*}
		0 \to \Z/p \to A[\pfr] \to \mu_p \to 1
	\end{align*}
	with $p = 7$ and~$5$, respectively. For the only two non-semistable abelian surfaces
	we found the following isomorphism and exact sequence.
	\begin{align*}
		J_0(125)^+[\sqrt{5}] &\isom \mu_5^{\otimes 2} \oplus \mu_5^{\otimes 3} \\
		1 \to \mu_7^{\otimes 4} \to \Jac(X_0(147)^*)[\pfr] &\to \mu_7^{\otimes 3} \to 1
	\end{align*}
	In all cases, we find that $\Sha(A/\Q)[\pfr] = 0$.
	Note that for the $\pfr \mid 7$ for which $\rho_\pfr$ is \emph{irreducible},
	$\Jac(X_0(147)^*)[\pfr] = 0$ follows from~\cite{KolyvaginLogachev}
	because $I_{-43}$ is not divisible by $7$. In the case of the square-free levels $N = 23, 29, 39$, we computed that the $\pfr$-adic $L$-function is a unit for the $\pfr \mid p$ with $\rho_\pfr$ irreducible, so we can conclude that $\Sel_\pfr(A/\Q) = 0$ and hence $\#\Sha(A/\Q)[\pfr] = 0$ from the known cases of the $\GL_2$ IMC. Note that our computation shows that in these cases, the image of $\rho_{\pfr^\infty}$ is maximal, so it contains $\SL_2(\Z_p)$. This implies that the IMC holds \emph{integrally}.
	
	Details will be presented in a forthcoming article, where plan also to extend
	our computations to cover some two-dimensional absolutely simple isogeny
	factors of~$J_0(N)$ that are not Jacobians of quotients of~$X_0(N)$ by
	Atkin-Lehner involutions.
	
	\bibliographystyle{amsplain}
	\bibliography{samplebib}
\end{document}